\title{On complex eigenvalues of a real nonsymmetric matrix}
\author{Andy Wathen\thanks{Computational Mathematics Group, Rutherford Appleton Laboratory, UK (andrew.wathen@stfc.ac.uk)}}
\newfont{\sym}{cmr12}
\newtheorem{thm}{Theorem}[section]
\newtheorem{lem}[thm]{Lemma}
\newtheorem{cor}[thm]{Corollary}
\newtheorem{prop}[thm]{Proposition}
\newtheorem{exmpl}[thm]{{\it Example}}
\newtheorem{rem}[thm]{{\it Remark}}
\def\smallskip{\vskip 5pt plus1pt minus1pt}
\def\medskip{\vskip 9pt plus2pt minus2pt}
\def\bigskip{\vskip 18pt plus4pt minus4pt}
\def\iddots{\reflectbox{$\ddots$}}
\def\half{{\frac{1}{2}}}
\def\R{\hbox{{\msbm \char "52}}}
\def\R{{\mathbb{R}}}
\def\C{{\mathbb{C}}}
\def\sqr#1#2{{\vcenter{\vbox{\hrule height.#2pt
        \hbox{\vrule width.#2pt height#1pt \kern#1pt \vrule width.#2pt}
             \hrule height.#2pt}}}}
\def\minres{{\large {\sc minres}}}
\def\gmres{{\large {\sc gmres}}}
\def\rotatechartwo#1{\reflectbox{#1}}
\def\antiJ{\rotatechartwo{\sf J}}
\def\symJ{\,\antiJ\!\!\!\!\!\sf J}
\def\JJ{{\symJ\,}}
\def\B{{\sf B}}
\def\T{{\sf T}}
\def\W{{\sf W}}
\def\Y{{\sf Y}}
\def\V{{\sf V}}
\def\A{{\sf A}}
\def\I{{\sf I}}
\def\U{{\sf U}}
\def\S{{\sf S}}
\def\J{{\sf J}}
\def\LLambda{{\sf \Lambda}}
\def\Q{{\sf Q}}
\def\H{{\sf H}}
\begin{document}

\maketitle
%\begin{center}{}
\medskip

% REQUIRED
\begin{abstract}
  We consider real non-symmetric matrices and their factorisation as a
  product of real symmetric matrices. The number of
  complex eigenvalues of the original matrix reveals restrictions on
  such factorisations as we shall prove.
%  Real matrices arise widely in applications. In many important
%  spheres, only real symmetric matrices are seen;
%  much is known in the situation---diagonalisation and eigenvalues often reveal
%  much crucial information for example---whereas general (non-symmetric) matrices
%  present more amorphous difficulties.  
%  $\antiJ$
%  $\symJ$
\end{abstract}

% REQUIRED
\begin{keywords}
 Real nonsymmetric matrices, Complex eigenvalues of real matrices, product of real symmetric matrices
\end{keywords}

%\end{center}
% REQUIRED
\begin{AMS}
  15A18, 15A23
\end{AMS}

\section*{Introduction}

We all know that real square nonsymmetric matrices can have complex eigenvalues, but
how many? Less well-known seems to be the fact that every real square matrix can be expressed
as the product of two real {\em symmetric} matrices; such a factorisation can often be done in many
different ways. As opposed to real nonsymmetric matrices, real symmetric matrices are always
diagonalisable and have real eigenvalues
(this is sometimes called the {\em Spectral Theorem}) and
%a specific
thus each real symmetric matrix has its own {\em inertia}: the triple $(p,n,z)$ where
$p$ is the number of positive
eigenvalues, $n$ is the number of negative eigenvalues and $z$ is the number
of zero eigenvalues.
%(Some authors consider this triple in a different order).

In this short article we consider an invertible nonsymmetric matrix $\B\in\R^{m\times m}$ when
we express it as
$\B=\T\W$ with $\T,\W\in\R^{m\times m}$ being real symmetric. For clarity we employ the
{\em Parlett convention}
\cite[page 6]{Parlett}: symmetric characters refer to symmetric matrices.
%Thus $\A=\A^T$, but $\B\ne \B^T$.
So $\T=\T^T$ and $\W=\W^T$, but $\B\ne \B^T$ are implied by this convention.
%We will later also see $\Y$,$\JJ$ which must be symmetric.

Here we prove bounds relating the number of complex eigenvalues of
a real invertible matrix $\B$ and the inertia of real symmetric matrices $\T,\W$
which are such that $\B=\T\W$. Note that because $\B$ is invertible, then
$\T^{-1}, \W^{-1}$ must exist. Further because
%reciprocation of real values does not change their sign (
$\lambda>0 \Leftrightarrow \frac{1}{\lambda}>0$, we have that the inertia of
$\T^{-1}$ is the same as the inertia of $\T$
and the inertia of $\W^{-1}$ is the same as the inertia of $\W$.
We follow up with further comment on such factorisation and give several examples.
Some of the results we present were known to Ostrowski \cite{Ostrowski} (see also
\cite{TauskyTodd}).

\section{Eigenvalues, Inertia and Bounds}

As soon as we learn about real polynomials, we encounter the
possibility of complex roots. Applied to the Characteristic Polynomial
we quickly discover that general (nonsymmetric) real matrices can
have complex eigenvalues; real {\em symmetric} matrices on the other hand
can only have real eigenvalues. In all cases the roots of a polynomial
are continuously dependent on the coefficients, hence the eigenvalues
of a matrix are continuously dependent on the matrix entries.
%This
%fact is used, for example, in the proof of Gershgorin's Second
%Theorem \cite{?}.
%So how many complex eigenvalues can a real matrix have?

%We start with an elementary argument:
Suppose $\B = \T\W$, and note that $\T^{-1},\W^{-1}$ must also be real symmetric.
We then have that

\begin{tabular}{ccc}
  $\lambda $ is an eigenvalue of $\B$ &$\iff$& ${\B-\lambda \I}$ is singular\\
  &$\iff$&
  $\T\W - \lambda \I$ is singular\\
  &$\iff$&
  $\W-\lambda \T^{-1}$ is singular.
\end{tabular}

\noindent Now $\V(\theta) = \theta \W + (1-\theta) \T^{-1}$ is a real symmetric matrix for all
$\theta \in\R$, hence  $\V(\theta)$ has real eigenvalues that depend continuously
on $\theta$.

\begin{lem}
  If $\B=\T\W$ with $\T$ and $\W$ having different inertia
%We suppose that $\T^{-1} = \V(0)$ has inertia $(p,n,0)$ and that
%$\W = \V(1)$ has inertia $(\widehat{p},\widehat{n},0)$. If
%$p\ne\widehat{p} (\Rightarrow n\ne\widehat{n})$
then $\B$ has at least one real negative eigenvalue.
\end{lem}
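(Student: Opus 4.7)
The plan is to exploit the symmetric matrix pencil $\V(\theta)=\theta\W+(1-\theta)\T^{-1}$ already introduced, and to show that when the inertias of $\T$ and $\W$ disagree, a singularity of $\V$ must occur at some $\theta_\ast\in(0,1)$, which in turn forces a negative real eigenvalue of $\B$.

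First, I would reinterpret the chain of equivalences at the end of the excerpt. Singularity of $\V(\theta)$ is equivalent to $\W+\frac{1-\theta}{\theta}\T^{-1}$ being singular whenever $\theta\ne 0$, and hence to $\lambda=\frac{\theta-1}{\theta}$ being an eigenvalue of $\B$. Under this correspondence $\theta\in(0,1)$ maps bijectively onto $\lambda\in(-\infty,0)$, so producing a singular value of $\V$ strictly between $0$ and $1$ is exactly the same as producing a negative real eigenvalue of $\B$.

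Next I would compare the endpoints. $\V(0)=\T^{-1}$ and $\V(1)=\W$ are both invertible (since $\B$ is), so neither is singular, and their inertias are those of $\T$ and $\W$ respectively (inertia being invariant under inversion, as noted in the introduction). Under the hypothesis that these inertias differ, the count of positive (equivalently, negative) eigenvalues of $\V(\theta)$ is not the same at $\theta=0$ as at $\theta=1$.

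Since $\V(\theta)$ is symmetric for every $\theta\in\R$, its eigenvalues are real and depend continuously on $\theta$. A change in the number of positive eigenvalues between two nonsingular endpoints therefore forces at least one eigenvalue curve $\mu(\theta)$ to cross zero at some interior point $\theta_\ast\in(0,1)$; at that point $\V(\theta_\ast)$ is singular. Applying the correspondence above, $\lambda_\ast=(\theta_\ast-1)/\theta_\ast<0$ is a real eigenvalue of $\B$, which is the required conclusion. The main subtlety, and really the only one, is bookkeeping: one must verify that no sign change is lost at an endpoint (both are invertible, so all eigenvalues are nonzero there) and that the mapping $\theta\mapsto\lambda$ sends the open interval $(0,1)$ into the negative reals, so that the crossing is genuinely in the interior and yields a negative $\lambda$ rather than, say, $\lambda=\infty$ or $\lambda=0$.
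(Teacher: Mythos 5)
Your proof is correct and follows essentially the same route as the paper: the same pencil $\V(\theta)=\theta\W+(1-\theta)\T^{-1}$, the same continuity-of-eigenvalues plus intermediate-value argument forcing a singular $\V(\theta_\ast)$ at an interior point, and the same conversion $\theta_\ast\mapsto(\theta_\ast-1)/\theta_\ast<0$ to a negative eigenvalue of $\B$. Your extra bookkeeping (endpoints nonsingular, bijection of $(0,1)$ onto $(-\infty,0)$) is a harmless and welcome tightening of what the paper leaves implicit.
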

\begin{proof}
  Since $\T$, thus $\T^{-1} = \V(0)$ and $\W=\V(1)$ have different inertia, at least one
  eigenvalue of $\V(\theta)$ must change sign as $\theta$ varies from $0$ to $1$.
  Hence, by the Intemediate Value Theorem, there must exist $\widehat{\theta}\in(0,1)$
  such that $\V(\widehat{\theta}\,)$ has a zero eigenvalue and therefore is singular. Thus 
  $$ \W + \frac{(1-\widehat{\theta}\,)}{\widehat{\theta}}\, \T^{-1}\quad{\mbox{and so also}}\quad
  \T\W+ \frac{(1-\widehat{\theta}\,)}{\widehat{\theta}}\, \I$$ are singular, hence
  $$\frac{(\widehat{\theta}-1)}{\widehat{\theta}} <0$$ is an eigenvalue of $\T\W = \B$
  and the result follows.
\end{proof}

With a little more subtlety we can make a more precise quantitative statement. In all that
follows we suppose that $\T$ (and so $\T^{-1}$ also) has inertia $(p,n,0)$ and that $\W$
(and so $\W^{-1}$ also) has inertia $(\widehat{p},\widehat{n},0)$.

\begin{cor}\label{negeigs}
  $\B=\T\W$ has at least $|p-\widehat{p}\,| = |n-\widehat{n}\,|$ real negative
  eigenvalues (counting multiplicities).
\end{cor}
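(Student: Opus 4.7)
The plan is to refine the Lemma's argument by following \emph{all} of the eigenvalues of $\V(\theta) = \theta\W + (1-\theta)\T^{-1}$ continuously as $\theta$ traverses $[0,1]$, and to count sign changes rather than merely detect one. Since $\V(\theta)$ is real symmetric and depends linearly (hence continuously) on $\theta$, its $m$ real eigenvalues can be labelled as continuous functions $\mu_1(\theta),\ldots,\mu_m(\theta)$ on $[0,1]$, e.g.\ by taking $\mu_i(\theta)$ to be the $i$th smallest eigenvalue of $\V(\theta)$. Since $\V(0)=\T^{-1}$ has inertia $(p,n,0)$ and $\V(1)=\W$ has inertia $(\widehat{p},\widehat{n},0)$, exactly $p$ of the numbers $\mu_i(0)$ are positive and $n$ are negative, and similarly for $\widehat{p},\widehat{n}$ at $\theta=1$. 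The equality $|p-\widehat{p}|=|n-\widehat{n}|$ claimed in the statement is then immediate from $p+n=\widehat{p}+\widehat{n}=m$.

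Next I would set $N_+ = \#\{i : \mu_i(0)>0,\ \mu_i(1)<0\}$ and $N_- = \#\{i : \mu_i(0)<0,\ \mu_i(1)>0\}$. A short bookkeeping calculation with the four possible sign patterns of $(\mu_i(0),\mu_i(1))$ yields $p-\widehat{p} = N_+ - N_-$, and hence $N_+ + N_- \geq |p-\widehat{p}|$. The Intermediate Value Theorem then forces each of these $N_++N_-$ curves to vanish at some $\widehat{\theta}\in(0,1)$, making $\V(\widehat{\theta})$ singular there.

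Each such singular $\V(\widehat{\theta})$ supplies, exactly as in the Lemma, the negative eigenvalue $\lambda_{\widehat{\theta}} = (\widehat{\theta}-1)/\widehat{\theta}$ of $\B=\T\W$; distinct $\widehat{\theta}$ yield distinct values of $\lambda_{\widehat{\theta}}$. When several curves $\mu_i$ pass through zero at the \emph{same} $\widehat{\theta}$, the identity $\T\,\V(\widehat{\theta}) = \widehat{\theta}\B + (1-\widehat{\theta})\I$, combined with the invertibility of $\T$, shows that the geometric multiplicity of $\lambda_{\widehat{\theta}}$ as an eigenvalue of $\B$ equals $\dim\ker\V(\widehat{\theta})$. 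Summing $\dim\ker\V(\widehat{\theta})$ over all singular parameters produced this way gives at least $N_++N_-\geq |p-\widehat{p}|$ negative eigenvalues of $\B$, counted with multiplicity (geometric, and hence also algebraic).

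The main obstacle I foresee lies in the multiplicity accounting rather than in the sign counting. One must handle carefully (i)~several curves $\mu_i$ passing through zero at the same $\widehat{\theta}$, and (ii)~curves that touch zero without genuinely changing sign. Item (i) is absorbed by the kernel identity above, which bundles simultaneous crossings into the multiplicity of a single eigenvalue of $\B$; item (ii) is harmless because $N_+$ and $N_-$ record only the curves with a genuine sign change at the endpoints and so cannot overcount.
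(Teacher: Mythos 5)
Your argument is correct and follows the same essential route as the paper: follow the eigenvalue trajectories of $\V(\theta)=\theta\W+(1-\theta)\T^{-1}$ across $[0,1]$, detect zero crossings via the Intermediate Value Theorem, and convert each singular $\V(\widehat\theta\,)$ into a negative eigenvalue $\widehat\lambda=(\widehat\theta-1)/\widehat\theta$ of $\B$ whose geometric multiplicity equals $\dim\ker\V(\widehat\theta\,)$, noting that distinct $\widehat\theta$ give distinct $\widehat\lambda$ so eigenspaces do not interfere.

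What you do differently is the counting step. The paper invokes Lax's \emph{Avoidance of Crossing} to suggest that there should be at least $|p-\widehat p\,|$ singular parameter values, explicitly concedes that ``this is not quite sufficient to provide a general proof,'' and then patches the simultaneous-crossing case with the kernel argument. Your version replaces the heuristic appeal to eigenvalue avoidance with an elementary bookkeeping identity on sorted eigenvalue curves: setting $N_+$ and $N_-$ to be the numbers of curves whose endpoint signs go $+\!\to\!-$ and $-\!\to\!+$ respectively, you get $p-\widehat p=N_+-N_-$, hence $N_++N_-\ge|p-\widehat p\,|$, and then sum the nullities. This is cleaner and entirely self-contained; it actually closes the gap the paper itself flags rather than relying on a principle that the paper admits is not decisive. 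The only point worth making fully explicit is the observation you implicitly use at the end, namely that since the $\widehat\lambda$'s arising from distinct $\widehat\theta$ are pairwise distinct and eigenvectors for distinct eigenvalues are independent, the sum $\sum_{\widehat\theta}\dim\ker\V(\widehat\theta\,)$ is a genuine lower bound on the total (even algebraic) multiplicity of negative eigenvalues of $\B$, and in particular cannot exceed $m$.
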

\begin{proof}
  The general principle of {\em Avoidance of Crossing} due to Lax \cite{Lax} indicates
  why there are likely to be at least $|p-\widehat{p}\,| = |n-\widehat{n}\,|$ values
  of $\widehat{\theta}\in(0,1)$ for which $\V({\widehat{\theta}}\,)$ is singular, though this
  is not quite sufficient to provide a general proof. A proof simply requires one to
  clarify mutiplicities if several eigenvalue trajectories of $\V(\theta)$
change sign at the same value
%  cross the axis at the same point,
  $\widehat{\theta}$. 
  Note that
  for any such $\widehat{\theta}$, because of symmetry, the kernel of $\V({\widehat{\theta}}\,)$
  must possess an orthonormal basis ${\cal{U}} = \{ u_1,\ldots , u_r\}$ where $r$  is the nullity
  of $\V({\widehat{\theta}}\,)$. Thus
  $$\left({\widehat{\theta}}\; \W + (1-{\widehat{\theta}}\,)\, \T^{-1} \right) u_i =0 ,$$
  equivalently
  $$\left({\widehat{\theta}}\; \T\W + (1-{\widehat{\theta}}\,)\, \I \right) u_i =0 $$
  for $i=1,\ldots,r$. Dividing by ${\widehat{\theta}}$, it is seen that
  the vectors $u_1 , \ldots ,u_r$ are necessarily linearly
  independent right eigenvectors of $\B=\T\W$ corresponding to the eigenvalue
  $\widehat{\lambda} = \frac{(\widehat{\theta}-1)}{\widehat{\theta}} <0$. Thus the kernel of
  $\B-\widehat{\lambda}\, \I$ must be of dimension at least $r$ when $r$ eigenvalue trajectories
  of $V(\theta)$ change sign at the same value $\theta={\widehat{\theta}}$.
\end{proof}

By applying similar argument to the above to the different parameterised real
symmetric matrix $\U(\phi) = \phi \W + (1-\phi) (-\T^{-1})$, noting that
$\U(1)=\W$ and that $\U(0) = -\T^{-1}$ which has inertia $(n,p,0)$ we have

\begin{cor}\label{poseigs}
  The real matrix $\B=\T\W$ has at least $|n-\widehat{p}\,| = |p-\widehat{n}\,|$ real and
  positive eigenvalues (counting multiplicities).
\end{cor}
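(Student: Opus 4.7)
The plan is to mimic the proof of Corollary~\ref{negeigs} almost verbatim, using the sign-flipped homotopy $\U(\phi) = \phi \W + (1-\phi)(-\T^{-1})$ in place of $\V(\theta)$. The endpoints are $\U(0) = -\T^{-1}$, whose inertia is $(n,p,0)$ because negation swaps the positive and negative eigenvalue counts of $\T^{-1}$, and $\U(1) = \W$, with inertia $(\widehat{p},\widehat{n},0)$. For every $\phi \in [0,1]$ the matrix $\U(\phi)$ is real symmetric, so its eigenvalues are real and depend continuously on $\phi$.

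First I would note that the equality $|n - \widehat{p}| = |p - \widehat{n}|$ is automatic: invertibility of $\T$ and $\W$ gives $p + n = \widehat{p} + \widehat{n} = m$, hence $n - \widehat{p} = \widehat{n} - p$. Next I would invoke the Avoidance of Crossing argument exactly as before: since $\U(0)$ has $n$ positive eigenvalues and $\U(1)$ has $\widehat{p}$ positive eigenvalues, at least $|n - \widehat{p}|$ eigenvalue trajectories of $\U(\phi)$ must pass through zero as $\phi$ moves from $0$ to $1$. At any such zero crossing $\widehat{\phi} \in (0,1)$ the matrix $\U(\widehat{\phi})$ is singular, and an orthonormal basis $\{u_1,\ldots,u_r\}$ of its kernel accounts for the $r$ trajectories that coalesce there.

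Finally I would translate each such singularity into a positive eigenvalue of $\B$. From
$$\bigl(\widehat{\phi}\,\W + (1-\widehat{\phi})(-\T^{-1})\bigr) u_i = 0,$$
left-multiplication by $\T$ and division by $\widehat{\phi}$ give
$$\B u_i \;=\; \frac{1-\widehat{\phi}}{\widehat{\phi}}\, u_i,$$
and the eigenvalue on the right is strictly positive for $\widehat{\phi}\in(0,1)$. Orthonormality of the $u_i$ implies linear independence, so the eigenvalue $(1-\widehat{\phi})/\widehat{\phi}$ has geometric multiplicity at least $r$ in $\B$. Summing over all zero crossings of the trajectories of $\U(\phi)$ on $(0,1)$ then yields the claimed bound of $|n - \widehat{p}|$ positive eigenvalues counted with multiplicity.

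The main obstacle is precisely the one already flagged in Corollary~\ref{negeigs}: the Avoidance of Crossing principle is heuristic rather than rigorous when several trajectories cross zero simultaneously, and multiplicities must be tracked by hand. The remedy, however, is the same kernel-basis argument used there, which ports over to $\U(\phi)$ without modification, so no genuinely new difficulty appears.
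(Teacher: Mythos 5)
Your proposal matches the paper's proof essentially verbatim: both use the sign-flipped homotopy $\U(\phi) = \phi\W + (1-\phi)(-\T^{-1})$ with endpoints $-\T^{-1}$ (inertia $(n,p,0)$) and $\W$, convert each singularity at $\widehat{\phi}\in(0,1)$ into the positive eigenvalue $(1-\widehat{\phi})/\widehat{\phi}$ of $\B$, and invoke the kernel-basis multiplicity argument from Corollary~\ref{negeigs}. You simply spell out details that the paper compresses into a reference to "the same quantitative argument as in the corollary above."
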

\begin{proof}
  Unless the inertia of $\W$ and $-\T$ are the same, there exist values $\widehat{\phi}\in(0,1)$
  such that $\U(\widehat{\phi}\,)$ is singular. For any such $\widehat{\phi}$, as in the
  proof of the lemma above we have that
  $\frac{1-{\widehat{\phi}}\,}{\widehat{\phi}}$ is a real, positive eigenvalue of $\B=\T\W$.
  The same quantitative argument as in the corollary above gives the result.
\end{proof}

We now come to our main result. Because the complex numbers include the real numbers, we
use `non-real' to refer to complex numbers with non-vanishing imaginary parts. As
above, the inertia of $\T$ is $(p,n,0)$.

\begin{prop}\label{mainprop}
  Suppose $\B\in\R^{m\times m}$ is invertible and $\B=\T\W$ with
  $\T=\T^T\in\R^{m\times m}$,$\W=\W^T\in\R^{m\times m}$.
  If $\B$ has $m-s$ non-real eigenvalues
  (thus $\frac{1}{2}m-\frac{1}{2}s$ complex conjugate pairs) then
  $$\frac{1}{2}m-\frac{1}{2}s \leq p \leq \frac{1}{2}m+\frac{1}{2}s .$$
\end{prop}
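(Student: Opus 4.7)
The plan is to combine the two corollaries to count real eigenvalues of $\B$, and then eliminate the dependence on $\W$'s inertia via the triangle inequality. Write the inertia of $\W$ as $(\widehat{p},\widehat{n},0)$, so $\widehat{p}+\widehat{n}=m$ since $\B$ (and hence $\W$) is invertible. By Corollary \ref{negeigs}, $\B$ has at least $|p-\widehat{p}\,|$ real negative eigenvalues, and by Corollary \ref{poseigs}, at least $|p-\widehat{n}\,|$ real positive eigenvalues. These sets are disjoint, so $\B$ has at least
$$|p-\widehat{p}\,| + |p-\widehat{n}\,|$$
real eigenvalues, counted with multiplicity.

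Next, since by hypothesis $\B$ has exactly $s$ real eigenvalues (the other $m-s$ coming in complex conjugate pairs), the above count must be at most $s$:
$$|p-\widehat{p}\,| + |p-\widehat{n}\,| \;\leq\; s.$$
Now I would invoke the triangle inequality in the form $|a|+|b|\ge|a+b|$ with $a=p-\widehat{p}$ and $b=p-\widehat{n}$, giving
$$|p-\widehat{p}\,|+|p-\widehat{n}\,| \;\geq\; |2p-(\widehat{p}+\widehat{n})| \;=\; |2p-m|.$$
Combining the two displayed inequalities yields $|2p-m|\le s$, which rearranges to
$$\tfrac{1}{2}m-\tfrac{1}{2}s \;\leq\; p \;\leq\; \tfrac{1}{2}m+\tfrac{1}{2}s,$$
as required.

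The only real subtlety is that the two corollaries must be applied in a way that correctly counts multiplicities, but this has already been dealt with inside the proofs of Corollaries \ref{negeigs} and \ref{poseigs}; the argument here treats each corollary as a black box. There is no genuine obstacle: the $\widehat{p}$-dependence is killed by the triangle inequality, and the bound on $p$ emerges symmetrically around $m/2$, which is precisely the shape of the claim. Note that the dual bound $\tfrac{1}{2}m-\tfrac{1}{2}s\le \widehat{p}\le \tfrac{1}{2}m+\tfrac{1}{2}s$ also follows by the same argument with the roles of $p$ and $\widehat{p}$ interchanged, so the symmetric matrices $\T$ and $\W$ play interchangeable roles in the conclusion.
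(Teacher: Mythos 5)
Your proof is correct and is essentially the paper's argument in direct (contrapositive) form: both rely on the same two corollaries to count real eigenvalues, and then compare that count with $s$. The paper proves it by contradiction and leaves the inequality $|p-\widehat{p}\,|+|n-\widehat{p}\,|>s$ unjustified, whereas you make the key step explicit via the triangle inequality $|p-\widehat{p}\,|+|p-\widehat{n}\,|\ge|2p-m|$ (and indeed $|p-\widehat{n}\,|=|n-\widehat{p}\,|$), which is a slightly cleaner presentation of the same idea.
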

\begin{proof}
%  Firstly note (as above) that the inertia of $\T$ and the inertia of $\T^{-1}$ are the same and
%  let the inertia of $\W$ be $(\widehat{p},\widehat{n},0)$.
%
  We prove by contradiction: if $p<\frac{1}{2}m-\frac{1}{2}s$ then $n>\frac{1}{2}m+\frac{1}{2}s$
  because $p+n=m$. Likewise if $p>\frac{1}{2}m+\frac{1}{2}s$ then $n<\frac{1}{2}m-\frac{1}{2}s$.
  Thus, whatever the value of $\widehat{p}$ we must have that
  \begin{equation}\label{numeigs}
|p-\widehat{p}\,|+|n-\widehat{p}\,|>\frac{1}{2}m+\frac{1}{2}s-\Bigl(\frac{1}{2}m-\frac{1}{2}s\Bigr)=s.
  \end{equation}
  By the corollaries \ref{negeigs} and \ref{poseigs} above, there are at least
  $|p-\widehat{p}\,|$ real negative eigenvalues and at least $|n-\widehat{p}\,|$ real
  positive eigenvalues of $\B$, thus the inequality (\ref{numeigs}) implies that there must be
  greater than $s$ real eigenvalues of $\B$. Because it has $m$ eigenvalues in total
  (from the fundamental theorem of algebra), $\B$ must have less than $m-s$ non-real eigenvalues.
  This completes the proof by contradiction.
\end{proof}

A similar result applies to $\W$:

\begin{cor}\label{hatW}
  If $\B=\T \W$ as above has $m-s$ non-real eigenvalues then
  $$\frac{1}{2}m-\frac{1}{2}s \leq \widehat{p} \leq \frac{1}{2}m+\frac{1}{2}s .$$ 
%  where $(\widehat{p},\widehat{n},0)$ is the inertia of $\W$.
\end{cor}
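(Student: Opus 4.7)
The plan is to reduce Corollary \ref{hatW} to Proposition \ref{mainprop} by swapping the roles of the two symmetric factors. The key observation is that transposition interchanges the order of $\T$ and $\W$: since $\T$ and $\W$ are symmetric,
$$\B^T = (\T\W)^T = \W^T\T^T = \W\T.$$
Because a matrix and its transpose share the same characteristic polynomial, $\B^T$ has exactly the same eigenvalues as $\B$, and in particular $\B^T$ has $m-s$ non-real eigenvalues.

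Now $\B^T = \W\T$ is itself a factorisation of an invertible real matrix as the product of two real symmetric matrices, with the first factor being $\W$ (of inertia $(\widehat{p},\widehat{n},0)$) and the second being $\T$ (of inertia $(p,n,0)$). Applying Proposition \ref{mainprop} to this factorisation — with the roles of $\T$ and $\W$ interchanged throughout — yields the desired inequality
$$\tfrac{1}{2}m-\tfrac{1}{2}s \leq \widehat{p} \leq \tfrac{1}{2}m+\tfrac{1}{2}s.$$

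Alternatively, one could bypass transposition and use the similarity $\W\T = \T^{-1}(\T\W)\T = \T^{-1}\B\T$, which again shows that $\W\T$ has the same spectrum as $\B$ and then invokes Proposition \ref{mainprop} on the factorisation $\W\T$. Either route avoids any real obstacle: the proof of Proposition \ref{mainprop} was written in a manner fully symmetric in the two factors (it only used the inertia of $\T$ and $\W$, together with Corollaries \ref{negeigs} and \ref{poseigs}, which themselves treat the two factors on equal footing), so no part of the argument needs to be reworked. The only mild point to check is that $\B^T$ (equivalently $\W\T$) is still invertible, which is immediate from the invertibility of $\B$.
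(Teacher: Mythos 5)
Your proof is correct and follows essentially the same route as the paper: observe $\B^T = \W\T$ by symmetry of $\T,\W$, note that $\B^T$ has the same eigenvalues as $\B$ (you justify this via the characteristic polynomial, the paper via the explicit similarity $\T^{-1}\B\T = \W\T$, which you also mention as an alternative), and then apply Proposition \ref{mainprop} with the roles of the two factors interchanged. No gaps.
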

\begin{proof}
  Note $\B^T =\W\T$ because $\T,\W$ are symmetric. The result follows from
  Proposition \ref{mainprop} by reversing the roles of $\T$ and $\W$ and noting that
  $\B^T$ and $\B$ are similar:
  $$ \T^{-1} \B \T = \T^{-1} \T\W \T = \W\T = \B^T ,$$
  therefore their eigenvalues must be the same.
\end{proof}

\begin{cor}\label{maincor}
  If $\B=\T\W$ as above has all $m$ eigenvalues non-real then
  $${\mbox{inertia of }}\T = (\frac{1}{2}m,\frac{1}{2}m,0) = {\mbox{inertia of }}\W$$
  (and, of course, $m$ has to be even).
\end{cor}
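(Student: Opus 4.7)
The plan is to apply Proposition \ref{mainprop} and Corollary \ref{hatW} directly, with the parameter $s$ set so that all $m$ eigenvalues are non-real. Since the statement ``$\B$ has $m-s$ non-real eigenvalues'' becomes ``$\B$ has $m$ non-real eigenvalues'' precisely when $s=0$, this is the relevant substitution.

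First I would substitute $s=0$ into the bound from Proposition \ref{mainprop} to obtain $\tfrac{1}{2}m \leq p \leq \tfrac{1}{2}m$, forcing $p=\tfrac{1}{2}m$. Because $\B$ is invertible, so is $\T$, meaning the $z$-component of its inertia vanishes and $n = m - p = \tfrac{1}{2}m$. This gives the inertia of $\T$ as $(\tfrac{1}{2}m,\tfrac{1}{2}m,0)$. Next I would apply the same substitution $s=0$ to Corollary \ref{hatW}, yielding $\widehat{p}=\tfrac{1}{2}m$ and then $\widehat{n}=\tfrac{1}{2}m$ by the same invertibility argument for $\W$.

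For the parity statement, I would observe that $p=\tfrac{1}{2}m$ must be a non-negative integer, which forces $m$ to be even. (Equivalently, non-real eigenvalues of a real matrix occur in complex conjugate pairs, so the count $m$ of non-real eigenvalues must be even.)

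There is no real obstacle here: the corollary is essentially a direct specialisation of the main proposition and its companion corollary to the extreme case $s=0$. The only point requiring a line of care is the observation that invertibility of $\B$ forces the zero-eigenvalue counts of both $\T$ and $\W$ to vanish, so that fixing $p$ (resp.\ $\widehat{p}$) actually determines the whole inertia triple.
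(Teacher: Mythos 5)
Your proposal is correct and is essentially the paper's own proof, which is simply the one-liner ``Take $s=0$ in Proposition \ref{mainprop} and Corollary \ref{hatW} above''; you have merely spelled out the details of the substitution and the deduction of the full inertia triples from invertibility.
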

\begin{proof}
  Take $s=0$ in Proposition \ref{mainprop} and Corollary \ref{hatW} above.
\end{proof}
\begin{rem}
  Taking $s=m$ in Proposition \ref{mainprop} and Corollary \ref{hatW} (meaning all
  eigenvalues of $\B\!$ are real) gives the unhelpful (but correct)
  inequalities $0\leq p, \widehat{p} \leq m$. These are evidently as tight as possible in
  this situation since, for example,  $\I=\A \A^{-1}$ for any invertible real symmetric
  matrix, $\A$, regardless of it's inertia. Of course, $\I$ has all real eigenvalues!
\end{rem}

\section{Factorisation}
Non-uniqueness of the real symmetric factors $\T,\W$ in the factorisation $\B=\T\W$
is evident from the preceeding remark, however proof of existence generally requires the
{\em{Jordan Canonical Form}} (see for example \cite[Chapter 3]{Horn_Johnson}): for any
$\B\in\R^{m\times m}$ there exists an invertible matrix
$\S$ such that $$\B=\S \J \S^{-1}$$ where $\J$ is a block diagonal matrix of Jordan blocks
each of the form
$$\widehat{\J}= \left[\begin{array}{ccccc} \lambda & 1&&&\\
    &\lambda&1  &&\\
    &&\ddots&\ddots&\\
    &&&\lambda&1\\
    &&&&\lambda
  \end{array}\right]
\in\C^{\ell\times \ell}
$$
where all entries not specified are zero. Here $\lambda$ is the same eigenvalue
%(of algebraic multiplicity $\ell$)
corresponding to which there is only one
  eigenvector. For a simple eigenvalue, we would have $\ell=1$.
  The same eigenvalue can appear in the different Jordan blocks that comprise $\J$; the number
  of such blocks with the same eigenvalue is the geometric multiplicity of that eigenvalue
  whereas the algebraic multiplicity of that eigenvalue is the sum of the dimensions of such blocks.
  The important observation in our context is that
  $${\widehat{\J}} = \left[\begin{array}{ccccc}
    &&&&1\\
    &&&1&\\
    &&\iddots&&\\
    &1&  &&\\
      1 & &&&
    \end{array}\right]
  \left[\begin{array}{ccccc}
    &&&&\lambda\\
    &&&\lambda&1\\
    &&\iddots&\iddots&\\
    &\lambda&1  &&\\
      \lambda & 1&&&
    \end{array}\right] = {\widehat{\Y}}\, {\widehat{\JJ}}
  $$
  where, as implied by the notation, the matrix $\widehat{\Y}$
%  ---here of dimension $\ell\times \ell$---
  and the
  matrix ${\widehat{\JJ}}$ are symmetric. Whether ${\widehat{\JJ}}$ is real or not depends on
  whether $\lambda$ is real or non-real (complex). Since this holds for each Jordan block
  (whether they are of dimension $1$ or greater), one can construct block diagonal matrices
  $\Y\in\R^{m\times m},\JJ\in\C^{m\times m}$ from these blocks such
  that $\J=\Y \JJ$ with $\Y$ and $\JJ$ being symmetric.
  Thus
  $$\B=\S\J\S^{-1}=\S\Y\JJ\,\S^{-1}=\underbrace{\S\Y\S^T}_{\T} \underbrace{\S^{-T}\JJ\,\S^{-1}}_{\W}$$
  where $\T$ and $\W$ are symmetric;
  $\S^{-T}$ means the inverse of the transpose or the transpose of the inverse which
  is the same matrix.
  Here $\T$ is necessarily real symmetric
if $\S$ can be chosen to be real;
  it is then {\em congruent} over $\R$
  to $\Y$, the inertia of which is obtained by summing the inertia of each $\widehat{\Y}$.
  It is readily checked that the inertia of $\widehat{\Y}\in\R^{\ell\times \ell}$ is
  $(\lfloor{\frac{\ell +1}{2}}\rfloor,\lfloor{\frac{\ell}{2}}\rfloor,0)$.
  Sylvester's Law of Inertia
  (see for example \cite[page 403]{Golub_vanLoan} or \cite[Section 6.3]{Strang}) guarantees
  that $\Y$ and $\T$ have the same inertia.

  If all eigenvalues are real, then $\widehat{\JJ}\,$, $\JJ$ and $\W$ are also real symmetric
  since for this situation one can take $\S\in\R^{m\times m}$ (see \cite[Theorem 3.1.11]{Horn_Johnson}).
  In this case the inertia of $\widehat{\JJ}$ is the same as the inertia of $\widehat{\Y}$
  if $\lambda>0$ and
  $(\lfloor{\frac{\ell}{2}}\rfloor,\lfloor{\frac{\ell+1}{2}}\rfloor,0)$ if $\lambda<0$; the
  inertia of $\JJ$ can likewise be obtained by summing over the blocks.

  If there are non-real eigenvalues, then one can employ the {\em real Jordan form} to remain
  with only real values. The basic idea here is that
  since a real matrix can only have complex
  conjugate pairs of eigenvalues, $a\pm i b$, and complex conjugate eigenvalues must have the same
  geometric multiplicity (i.e. the same number and size of Jordan blocks), then the
  real $2\times 2$ diagonal block
  and it's factorisation as a product of real symmetric matrices  
$$\left[\begin{array}{cc} a& -b\\b&a \end{array}\right] =
\left[\begin{array}{cc} 0& 1\\1&0 \end{array}\right]
\left[\begin{array}{cc} b& a\\a&-b \end{array}\right]$$
can be employed. For
a thorough description of the real Jordan form, see \cite[pages 151--153]{Horn_Johnson}, but
also see Example 3 below.
The matrix $\S$ can always be taken to be real with the real Jordan form. 
%more details, see below in Example 3.

\begin{exmpl}
  If $\B$ is diagonalisable over $\R$, that is it is diagonalisable and has
  all real eigenvalues. In this situation
all Jordan blocks are of dimension $1\times 1$ and so
  $$\B = \S\LLambda \S^{-1}$$ for some invertible
  matrix $\S\in\R^{m\times m}$ and a diagonal matrix of real eigenvalues
  $$\LLambda={\mbox{diag}}(\lambda_1, \lambda_2.,\ldots,\lambda_m) .$$
  Thus $$\B = \underbrace{\S\S^T}_{\T} \underbrace{\S^{-T} \LLambda\S^{-1}}_{\W}$$
  is one possible factorisation as the product of real symmetric matrices. Note
  here that $\T$ is necessarily positive definite since for any non-zero $x\in\R^m$
  $$x^T \T x = x^T \S \S^T x = (\S^T x)^T (\S^T x)>0,$$ and that the inertia of $\W$
  is the same as the inertia of $\LLambda$ because of Sylvester's Law of Inertia.
  Importantly, because $\T$ is symmetric and positive definite it is diagonalisable
  with orthonormal eigenvectors and positive real eigenvalues:
  $\T= \Q \H \Q^T$ where $\Q$ is an orthogonal matrix and $\H$ is a real
  diagonal matrix with entries $h_{i,i}>0$ which are the eigenvalues of $\T$.
  In this situation, an invertible symmetric and
  positive definite square root exists:
  $$\T^{\frac{1}{2}} = \Q \H^{\frac{1}{2}} \Q^T$$ where $\H^{\frac{1}{2}}$ is the diagonal matrix
  with real diagonal entries $+\sqrt{h_{i,i}}>0$. The similarity transformation
  $$\T^{-\frac{1}{2}} \B \T^{\frac{1}{2}} = \T^{-\frac{1}{2}} \T\W \T^{\frac{1}{2}} = 
  \T^{\frac{1}{2}} \W \T^{\frac{1}{2}}
  $$
  shows that $\B$ is similar to the real symmetric matrix
  $\T^{\frac{1}{2}} \W \T^{\frac{1}{2}}$
  and so necessarily has the same (real) eigenvalues.
  In fact $\B$ is {\em self-adjoint} in the inner product defined by
  $\langle x,y\rangle_{\T^{-1}}:= y^T \T^{-1} x$.
  We can easily see the non-uniqueness in this situation by noticing that there are many
  ways to express a real diagonal matrix as the product of real diagonal matrices:
  $\LLambda = \LLambda_1 \LLambda_2 $ ; for example
  \begin{eqnarray*}
    \LLambda_1 &=& {\mbox{diag}}(-\lambda_1,\pm\sqrt{|\lambda_2|},\ldots, 2),\\
    \LLambda_2&=&{\mbox{diag}}(-1,\sqrt{|\lambda_2|},\ldots,\frac{1}{2}\lambda_m)\end{eqnarray*}
  (where $+ or -$ is used to get the correct sign of $\lambda_2$). Clearly 
  $$\B=\underbrace{\S\LLambda_1\S^T}_{\T} \underbrace{\S^{-T} \LLambda_2\S^{-1}}_{\W}.$$
  It is thus seen that $\T,\W$ can have any inertia in this situation.
\end{exmpl}

\begin{exmpl}
  If $\B\in\R^{m\times m}$ has only one distinct eigenvalue (it must be real) and only
  one eigenvector, then it has a single Jordan block as above with $\ell = m$:
  $$\B=\S \widehat{\J} \S^{-1}=
  \underbrace{\S\widehat{\Y}\S^T}_{\T} \underbrace{\S^{-T}\,\widehat{\JJ}\,\S^{-1}}_{\W} $$
  for some real invertible matrix $\S$. 
  Note that if $m$ is even then here is an example where
  $$\mbox{inertia of }\T = (\frac{m}{2},\frac{m}{2},0) = \mbox{inertia of }\W$$
  but all eigenvalues are real; the converse of Corollary \ref{maincor} is certainly false!
\end{exmpl}

\begin{exmpl}
  If $\B\in\R^{6\times 6}$ has a complex conjugate pair of eigenvalues $a\pm i b$ which
  are double eigenvalues but with only one eigenvector each as well as a further
  complex conjugate pair of simple eigenvalues $c\pm i d$, then by invoking the real Jordan form,
  there is an invertible matrix $\S$ with
\begin{eqnarray*}
    \B&=&\S
    \left[\begin{array}{cccccc}
c&-d&0&0&0&0\\
d&c&0&0&0&0\\
        0&0&a&-b&1&0\\
        0&0&b&a&0&1\\
        0&0&0&0&a&-b\\
        0&0&0&0&b&a        
      \end{array}\right]
    \S^{-1}\\
    &=&\S\left[\begin{array}{cccccc}
 0&1&0&0&0&0\\
        1&0&0&0&0&0\\
        0&0&0&0&0&1\\
        0&0&0&0&1&0\\
        0&0&0&1&0&0\\
        0&0&1&0&0&0
        \end{array}\right] \S^{T}
    \S^{-T} 
    \left[\begin{array}{cccccc}
        d&c&0&0&0&0\\
        c&-d&0&0&0&0\\
        0&0&0&0&b&a\\
        0&0&0&0&a&-b\\
        0&0&b&a&0&1\\
        0&0&a&-b&1&0
      \end{array}\right]\S^{-1}\\
  &=&\underbrace{\S \Y \S^T}_{\T}\underbrace{\S^{-T} \JJ\, \S^{-1}}_{\W} .\\
\end{eqnarray*}
The matrix $\S$ (and hence also $\S^{-1}$) can be chosen to be real
(see \cite[pages 152,153]{Horn_Johnson}), thus $\T,\W$ are real symmetric.
It is readily checked here that the inertia of $\Y$ (and therefore the inertia of $\T$ by
Sylvester's Law of Inertia) and the inertia of $\JJ$ (and correspondingly of $\W$)
are both $(3,3,0)$ in accordance with Corollary \ref{maincor}.
\end{exmpl}

\section*{Conclusions}
\label{sec:conclusions}

Every real square matrix can be expressed as the product of two real
symmetric matrices of the same dimension.  Such a factorisation is
generally non-unique. We have proved that if the given real matrix has
only non-real (i.e. complex) eigenvalues, then the two real symmetric matrices in any
such factorisation must have equal numbers of positive and negative eigenvalues.
More generally, we give bounds on the number of positive and negative eigenvalues
in terms of the number of non-real (i.e. complex) eigenvalues of the original
matrix.

%\section*{Acknowledgments}
%I am grateful to
%Hussam Al Daas for useful conversations about this work.
%three anonymous referees, whose comments have clarified and
%greatly improved this short manuscript.

\bibliographystyle{siam}
\bibliography{complexeigs_references}
\end{document}